\newcommand{\mathbh}[1]{\mathbb{#1}}
\def\MR#1{}
\theoremstyle{plain}
\newtheorem{thm}{Theorem}
\newtheorem{cor}[thm]{Corollary}
\theoremstyle{definition}
\newtheorem{defn}[thm]{Definition}
\newtheorem{ex}[thm]{Example}
\theoremstyle{remark}
\title{Connectivity through bounds for the Castelnuovo-Mumford regularity}
\author{Gabriele Balletti}
\address{Department of Mathematics\\
Stockholm University, SE - 106 91 Stockholm, Sweden}
\email{balletti@math.su.se}
\date{\today}
\keywords{Connectivity, pseudomanifold graph, simplicial pseudomanifold, Castelnuovo--Mumford regularity}
\subjclass[2010]{05C40 (Primary); 05E40 (Secondary)}
\begin{document}


\begin{abstract}
In this note we generalize and unify two results on connectivity of
graphs: one by Balinsky and Barnette, one by Athanasiadis. This is done
through a simple proof using commutative algebra tools. In particular
we use bounds for the Castelnuovo--Mumford regularity of their
Stanley--Reisner rings. As a result, if $\Delta $ is a simplicial
$d$-pseudomanifold and $s$ is the largest integer such that
$\Delta $ has a missing face of size $s$, then the 1-skeleton of
$\Delta $ is $\left \lceil  \frac{(s+1)d}{s} \right \rceil $-connected.
We also show that this value is tight.
\end{abstract}

\maketitle

\section{Introduction}

We say that a graph $G$ having more than $m$ vertices is $m$-connected
whenever it is impossible to disconnect it by removing fewer than
$m$ vertices together with their incident edges. Interesting results on
the connectivity of $G$ can be found if $G$ is the 1-skeleton of a pure
polyhedral complex. The first step in this direction was taken in 1922
by Steinz \cite{Ste22} who characterized the 1-skeleta of
3-dimensional polytopes as the planar 3-connected graphs. Later Balinsky
\cite{Bal61} proved that the \mbox{1-skeleton} of a
$(d+1)$-dimensional convex polytope is $(d+1)$-connected. This result
has been extended to polyhedral $d$-pseudomanifolds by Barnette
\cite{Bar82}.

\begin{thm}[Balinsky, Barnette]
\label{thm:BB}
The 1-skeleton of a $d$-dimensional polyhedral pseudomanifold is
$(d+1)$-connected.
\end{thm}

In the simplicial case, Athanasiadis \cite{Ath11} proved a
stronger result for the 1-skeleton of a flag (i.e. coinciding with the
clique complex of its 1-skeleton) simplicial $d$-pseudomanifold.

\begin{thm}[Athanasiadis]
\label{thm:Ath}
The 1-skeleton of a flag simplicial d-pseudomanifold is 2d-connected.
\end{thm}

This topic has recently attracted interest among commutative
algebraists. Bj\"{o}rner and Vorwerk \cite{BV14} extended
Athanasiadis' result interpolating it with Barnette's one, thanks to a
generalization of flag complexes. Recently, Adiprasito, Goodarzi and
Varbaro \cite{AGV14} provided an algebraic method that allowed
them to obtain a more general result.

In this note we also present an algebraic approach, which allows us to
bridge the gap between Theorem~\ref{thm:BB} and Theorem~\ref{thm:Ath}
in a different direction than the one taken by \cite{BV14}
and~\cite{AGV14}. This can be done under even weaker hypotheses.

The strategy involves finding upper bounds for the Castelnuovo--Mumford
regularity of the Stanley--Reisner ring of a simplicial complex
$\Delta $ which are expressed in terms of the number of vertices of
$\Delta $. Such bounds are common in literature, as the
Castelnuovo--Mumford regularity itself acts as an upper bound for a wide
variety of algebraic invariants (such as maximum degree for which the
local cohomology modules vanish, the integer from which the Hilbert
function behaves polynomially, or the maximum degree of the syzygies).

Furthermore, we will require these bounds to behave well up to taking 
restrictions of the
simplicial complex (we call such bounds \emph{suitable}). Then we focus
on a set $T$ of vertices of $\Delta $ which have to be removed from the
1-skeleton $G$ of $\Delta $ in order to disconnect it. By setting some
hypotheses on $\Delta $, we can bound the regularity of the
Stanley--Reisner ring of the restriction $\Delta |_{T}$ from below. Such
hypotheses are weaker than requiring $\Delta $ to be a pseudomanifold.
By reversing and applying a suitable bound for the regularity, we find
a lower bound for the cardinality of the subset, therefore we can
estimate the connectivity of $G$.

\section{Preliminaries}

Let $\Delta $ be a simplicial complex on the vertex set $[n]=\{1,
\ldots ,n\}$. We call the faces of dimension 0 and 1 \emph{vertices} and
\emph{edges}, respectively. We call a face of $\Delta $ which is maximal
with respect to inclusion \emph{facet}, and we say that $\Delta $ is
\emph{pure} if its facets have the same dimension. We define the
$1$-\emph{skeleton} of a simplicial complex $\Delta $ as the set of all
the faces of $\Delta $ of dimension lower than or equal to $1$. For our
purposes we will deal with undirected simple graphs, therefore we define
a graph to be the 1-skeleton of some simplicial complex~$\Delta $. Given
a subset $T \subset [n]$ we denote by $\Delta |_{T}$ the
\emph{restriction of $\Delta $ to~$T$}, i.e. all the faces $\sigma $ of
$\Delta $ such that $\sigma \subseteq T$. A subcomplex of $\Delta $ is
called \emph{induced} if it is a restriction of $\Delta $ to some set
$T \subseteq [n]$.

Let $\mathbh{k}$ be an arbitrary field and $S=\mathbh{k}[x_{1},
\ldots ,x_{n}]$ the polynomial ring on $n$ variables. The
\emph{Stanley--Reisner ring} of the complex $\Delta $ (with respect to
the field $\mathbh{k}$) is the graded ring $\mathbh{k}[\Delta ]=S/I
_{\Delta }$ where the \emph{Stanley--Reisner ideal} $I_{\Delta }$ is the
ideal generated by all the squarefree monomials $x_{i_{1}}\cdots x
_{i_{r}} \in S$ such that $\{i_{1},\ldots ,i_{r}\} \notin \Delta $.

If $F \subseteq [n]$ and $F \notin \Delta $, but all of its proper
subsets are in $\Delta $, then we say that $F$ is a
\emph{missing face} of $\Delta $ of size $|F|$. Note that $I_{\Delta
}$ is generated by all the monomials $x_{i_{1}}\cdots x_{i_{r}}
\in S$ such that $\{i_{1},\ldots ,i_{r}\}$ is a missing face of
$\Delta $.

Let $M$ be a finitely generated graded $S$-module. Recall that the
Hilbert's Syzygy Theorem grants the existence of a \emph{minimal graded
free resolution} of $M$, i.e. a chain complex $\mathbf{F}$ of graduated
free modules of minimal rank with degree-preserving maps such that
$\mathbf{F}$ has an exact augmentation $\mathbf{F} \rightarrow M
\rightarrow 0$. Let
\[
0 \xrightarrow{}
\displaystyle
{\bigoplus_{{j \in \mathbb{N}}}} S(-j)^{\beta_{s,j}} \xrightarrow{
\phi_{s}}
\cdots \xrightarrow{\phi_{2}}
\displaystyle
{\bigoplus_{{j \in \mathbb{N}}}} S(-j)^{\beta_{1,j}} \xrightarrow{
\phi_{1}}
\displaystyle
{\bigoplus_{{j \in \mathbb{N}}}} S(-j)^{\beta_{0,j}}
\xrightarrow{}
0
\]
be a minimal graded free resolution of $M$, where the shifting numbers
$-j$ are chosen in order to let the maps $\phi_{i}$ be
degree-preserving. We call the exponents $\beta_{i,j}=\beta_{i,j}(M)$
\emph{graded Betti numbers}. Furthermore we define the
\emph{Castelnuovo--Mumford regularity} of $M$ as $\mathrm{reg}(M)=
\max \{j-i|\beta_{i,j}(M) \neq 0\}$.

We denote by $\widetilde{H}_{i}(\Delta ;\mathbh{k})$ the
$i$\textsuperscript{th} \emph{reduced (simplicial) homology} of
$\Delta $ over the field $\mathbh{k}$. Hochster's formula
\cite{Hoc77} relates the Betti numbers of the Stanley--Reisner ring
$\mathbh{k}[\Delta ]$ to the reduced homology of restrictions of
$\Delta $ as follows
\[
\beta_{i,j}(\mathbh{k}[\Delta ])=
\sum_{\substack{T \subseteq [n] \\
|T|=j}} \dim_{\mathbh{k}}\widetilde{H}_{j-i-1}(\Delta |_{T};
\mathbh{k}).
\]

It immediately follows that if some restriction $\Delta |_{T}$ of
$\Delta $ has nonzero homology in homological degree $k$ then
\begin{eqnarray}
\label{eq:ineq}
\mathrm{reg}(\mathbh{k}[\Delta ]) \geq k+1;
\end{eqnarray}
note that the equality holds whenever $k$ is the maximum integer such
that some restriction of $\Delta $ has nonzero homology in homological
degree $k$.

\section{Suitable bounds}

We now introduce a new family of simplicial complexes to which we will
extend the results on connectivity.

\begin{defn}
We say that the simplicial complex $\Delta $ is a \emph{vertex minimal
$k$-cycle} if, for some field $\mathbh{k}$, $\widetilde{H}_{k}(
\Delta |_{T};\mathbh{k})\neq 0$ if and only if $T=[n]$.
\end{defn}

Recall that a simplicial complex $\Delta $ is
\emph{strongly connected} if for every couple of facets $\tau $ and
$\sigma $ of $\Delta $, there exist a sequence $\tau_{0}, \tau_{1} ,
\ldots , \tau_{m}$ of facets of $\Delta $ with $\tau = \tau_{0}$ and
$\sigma =\tau_{m}$ such that $\tau_{i-1} \cap \tau_{i}$ is a
codimensional 1 face of both $\tau_{i-1}$ and $\tau_{i}$, for
$1 \leq i \leq m$. A \emph{simplicial $d$-pseudomanifold} is a strongly
connected simplicial complex each of whose $(d-1)$-dimensional face is
contained in exactly two facets.
As a consequence of being strongly
connected, a pseudomanifold is also pure. Note that a $d$-dimensional
simplicial pseudomanifold is a vertex minimal $d$-cycle, but is not
necessary that a vertex minimal $d$-cycle is pure or that each of its
$(d-1)$-dimensional faces is contained in exactly two facets.

In the following example we construct a $d$-dimensional vertex minimal
2-cycle for an arbitrary $d \geq 2$. For $d = 3$, it is pure, but not
strongly connected, and its \mbox{$(d-1)$}-dimensional faces are contained in
just one facet; for $d \geq 4$, $\Delta $ is not even pure.

\begin{ex}
\label{ex:cubo}
Let $Q$ be a polygon in $\mathbb{R}^{2}$ with $d+1$ vertices, for some
$d \geq 2$. Let $V$ be the set of the $2d+2$ vertices of the prism
$P=\mathrm{conv} (Q \times \{0\},Q \times \{1\}) \subset \mathbb{R}
^{3}$. We define the $d$-dimensional simplicial complex $\Delta $ on the
vertex set $V$ as the simplicial complex whose faces are all the subsets
of $V$ whose elements lie on a common face of (the boundary of)~$P$.
\end{ex}

The following theorem relates the connectivity of a vertex minimal
$k$-cycle $\Delta $ to the regularity of the Stanley--Reisner ring of
specific restrictions of $\Delta $. It allows us to prove the main
results of this note in Section~\ref{section:ultima}.

\begin{thm}
\label{thm:main}
Let $\Delta $ be a vertex minimal $k$-cycle and let $T$ be a set of
vertices of $\Delta $ such that $\Delta |_{T}$ is disconnected. Then
$\mathrm{reg}(\mathbh{k}[\Delta |_{[n]\setminus T}]) \geq k$.
\end{thm}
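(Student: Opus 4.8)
The plan is to produce a single subset $W \subseteq [n]\setminus T$ whose restriction carries nonzero reduced homology in degree $h-1$, after which the bound follows at once from \eqref{eq:ineq}. Write $U = [n]\setminus T$. Since $\D|_T$ is disconnected, I would first choose a partition $T = A \sqcup B$ into nonempty sets such that no edge of $\D$ joins a vertex of $A$ to a vertex of $B$. Because every face of positive dimension contains an edge, this immediately rules out any face of $\D$ meeting both $A$ and $B$: in other words, every $\sigma \in \D$ satisfies $\sigma \cap A = \emptyset$ or $\sigma \cap B = \emptyset$.

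With this in hand, set $\D_A = \D|_{U \cup A}$ and $\D_B = \D|_{U \cup B}$. The observation above says precisely that $\D = \D_A \cup \D_B$, whereas a face lies in both exactly when it avoids $A$ and $B$, so that $\D_A \cap \D_B = \D|_U$. This is the setting for the reduced Mayer--Vietoris long exact sequence, whose relevant segment is
\[
\rH_h(\D_A;\kk) \oplus \rH_h(\D_B;\kk) \longrightarrow \rH_h(\D;\kk) \xrightarrow{\ \de\ } \rH_{h-1}(\D|_U;\kk).
\]

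Next I would invoke the defining property of a vertex minimal $h$-cycle. As $A$ and $B$ are nonempty, both $U \cup A \subseteq [n]\setminus B$ and $U \cup B \subseteq [n]\setminus A$ are proper subsets of $[n]$, whence $\rH_h(\D_A;\kk) = \rH_h(\D_B;\kk) = 0$; consequently the connecting map $\de$ is injective. Taking $T = [n]$ in the same defining property gives $\rH_h(\D;\kk) = \rH_h(\D|_{[n]};\kk) \neq 0$, so injectivity of $\de$ forces $\rH_{h-1}(\D|_U;\kk) \neq 0$. Applying \eqref{eq:ineq} to the complex $\D|_U$ (taking there the trivial restriction to all of $U$) then yields $\reg(\kk[\D|_U]) \geq (h-1)+1 = h$, as desired.

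The main point to get right is the combinatorial step showing that the disconnection of $\D|_T$ lifts to a genuine decomposition $\D = \D_A \cup \D_B$ of the entire complex; this is what makes the Mayer--Vietoris sequence available with intersection exactly $\D|_U$. Once that is secured, the remaining bookkeeping is routine, and I would only need to check that the relevant case $h \geq 1$ guarantees $U \neq \emptyset$ (so that $\D|_U$ is a genuine nonempty complex and the reduced sequence behaves as stated), which follows because a vertex minimal $h$-cycle with $h \geq 1$ is itself connected and hence $T \neq [n]$.
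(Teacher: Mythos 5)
Your proof is correct and follows essentially the same route as the paper: you cover $\D$ by the two restrictions $\D|_{[n]\setminus B}$ and $\D|_{[n]\setminus A}$ (the paper's $\Gamma_1$, $\Gamma_2$ with $A=U_1$, $B=U_2$), kill their $h$\textsuperscript{th} homology by vertex minimality, and use Mayer--Vietoris to push $\rH_h(\D;\kk)\neq 0$ into $\rH_{h-1}(\D|_{[n]\setminus T};\kk)\neq 0$, concluding via \eqref{eq:ineq}. Your explicit verification that a face meeting both $A$ and $B$ would contain a crossing edge (so that $\D=\D_A\cup\D_B$ and $\D_A\cap\D_B=\D|_U$) is a point the paper leaves implicit, but it is the same argument.
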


\begin{proof}
Let $U_{1}$ be a subset of vertices of $T$ such that $\Delta |_{U_{1}}$
is one of the connected components of $\Delta |_{T}$, and let
$U_{2} =T \setminus U_{1}$. Let $\Gamma_{1}=\Delta |_{[n]\setminus U
_{2}}$ and $\Gamma_{2}=\Delta |_{[n]\setminus U_{1}}$. Note that
$\Gamma_{1} \cup \Gamma_{2} = \Delta $ and $\Gamma_{1} \cap \Gamma
_{2} = \Delta |_{[n] \setminus T}$. By applying the Mayer--Vietoris
sequence for the reduced homology of simplicial complexes to
$\Gamma_{1}$ and $\Gamma_{2}$ we obtain the following exact sequence
\[
0 \rightarrow \widetilde{H}_{k}(\Delta ;\mathbh{k}) \rightarrow
\widetilde{H}_{k-1}(\Delta |_{[n] \setminus T };\mathbh{k}) \rightarrow
\cdots ,
\]
where the first zero comes from the hypothesis that $[n] \setminus U
_{1}$ and $[n] \setminus U_{2}$ are proper subsets of $[n]$ and
therefore $\widetilde{H}_{k}(\Gamma_{1};\mathbh{k})=\widetilde{H}
_{k}(\Gamma_{2};\mathbh{k})=0$, because $\Delta $ is a vertex minimal
$k$-cycle. Then, since $\widetilde{H}_{k}(\Delta ;\mathbh{k}) \neq 0$,
$\widetilde{H}_{k-1}(\Delta |_{[n] \setminus T };\mathbh{k})\neq 0$.

We conclude by inequality \eqref{eq:ineq}.
\end{proof}

Note that an upper bound for the regularity of the Stanley--Reisner ring
$\mathbh{k}[\Delta ]$ of a simplicial complex $\Delta $ given in terms
of the number of vertices of $\Delta $ can be reversed and applied to
Theorem~\ref{thm:main}. In this way it is possible to give a lower bound
for the number of vertices one needs to remove from $\Delta $ in order
to disconnect it, provided that the bound can be applied to restrictions
of $\Delta $.

More specifically, we will say that an upper bound for $\mathrm{reg}(
\mathbh{k}[\Delta ])$ in terms of $n$ is \emph{suitable} for the
purposes of this note, if the same bound holds true for $\mathrm{reg}(
\mathbh{k}[\Delta |_{T}])$ for each proper and nonempty subset
$T \subset [n]$ by substituting $n$ by $|T|$ in the bound itself.

A family of suitable bounds can be achieved thanks to the Taylor
resolution (\cite{Tay66}, see also \cite{BPS98}). This bound
is well known, but for the convenience of the reader we provide here a
quick argument.

Let $I$ be a monomial ideal in $S$. The degree $j$ part of the Taylor
resolution $\mathbf{F}$ of $S/I$ in homological degree $i$ must have
rank at least $\beta_{i,j}(S/I)$. If moreover $I$ is squarefree, let
$s$ be the maximum degree of one of its minimal generators $m_{1},
\ldots , m_{r}$. In this case, the multigrade vector of each generator
of the degree $j$ part of $\mathbf{F}$ in homological degree $i$ is an
element of $\{0,1\}^{n}$. Since the number of nonzero entries of this
vector cannot exceed $si$, we conclude that $\beta_{i,j}(S/I)\neq 0$
only when $j \leq si$.

We can now obtain a bound for the Castelnuovo--Mumford regularity of
$S/I$ in terms of $s$ and the number of indeterminates $n$. Indeed there
must be a nonzero Betti number $\beta_{i,j}(S/I)$ such that
$j-i=\mathrm{reg}(S/I)$. As observed before $j \leq si=s(j-
\mathrm{reg}(S/I))$, therefore, since $I$ squarefree also implies that
$j \leq n$, we obtain
\begin{eqnarray}
\label{eq:bound}
\mathrm{reg}(S/I) \leq \frac{n(s-1)}{s}.
\end{eqnarray}
This bound is suitable as the maximum degree $s$ does not increase up
to restrictions of~$\Delta $.

Improved bounds can be obtained by strengthening the hypotheses. We
report a result proved by Dao, Huneke, Schweig
\cite[Theorem~4.9]{DHS13}, where the hypotheses have been rewritten
thanks to the characterization for the $q$-step linearity given by
Eisenbud, Green, Hulek and Popescu \cite[Theorem~2.1]{EGHP05}.

\begin{thm}[\cite{DHS13}]
\label{thm:DHS}
Let $\Delta $ be a flag simplicial complex and $q$ a positive integer
such that $\Delta $ contains no induced $m$-cycles for $4 \leq m
\leq q+3$. Then
\[
\mathrm{reg}(\mathbh{k}[\Delta ]) \leq \min \left\{  \log_{
\frac{q+4}{2}}\left( \frac{n-1}{q+1} \right)  +2, \log_{\frac{q+4}{2}}
\left( \frac{(n-1) \ln (\frac{q+4}{2})}{q+1} +
\frac{2}{q+4}\right ) +2\right\} .
\]
\end{thm}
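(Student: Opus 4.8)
Since this is \cite[Theorem~4.9]{DHS13} rephrased through the $k$-step linearity criterion of \cite[Theorem~2.1]{EGHP05}, the plan is to reconstruct the Dao--Huneke--Schweig argument. The natural entry point is the homological description of regularity behind Hochster's formula: $\reg(\kk[\D])$ is the largest $j$ such that $\rH_{j-1}(\D|_W;\kk)\neq 0$ for some $W\subseteq[n]$. Hence it is enough to control this top homological degree, and the whole statement amounts to showing that a large top degree is only possible when $n$ is large --- quantitatively, that raising the degree by one unit forces the vertex count to grow by a multiplicative factor of at least $\tfrac{k+4}{2}$.

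First I would translate the hypothesis homologically. By the \cite{EGHP05} characterization, forbidding induced $m$-cycles for $4\le m\le k+3$ is equivalent to $\kk[\D]$ having a resolution that is linear for its first $k$ steps; concretely this says that the reduced homology of induced subcomplexes cannot appear too cheaply in low degrees, and the condition is inherited by every restriction $\D|_W$ and, since $\D$ is flag, by every link $\link_\D x=\D|_{N(x)}$. This inheritance is what will let me run an induction on the number of vertices.

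The engine is the standard deletion--link recursion for a flag complex: for any vertex $x$,
\[
\reg(\kk[\D]) \le \max\bigl\{\, \reg(\kk[\D\setminus x]),\; \reg(\kk[\link_\D x])+1 \,\bigr\},
\]
where $\D\setminus x=\D|_{[n]\setminus x}$ loses one vertex and $\link_\D x$ is supported on the neighborhood $N(x)$. The deletion term is absorbed directly by the inductive hypothesis, so the content sits in the link term: I would choose $x$ so that the forbidden-cycle condition forces $\link_\D x$ to be supported on at most $\tfrac{2}{k+4}\,n$ vertices (up to the additive corrections responsible for the constants $k+1$ and $+2$). Each $+1$ in regularity then costs a factor $\tfrac{k+4}{2}$ in the number of vertices, and iterating down to a complex with no reduced homology gives $n\gtrsim\bigl(\tfrac{k+4}{2}\bigr)^{\reg(\kk[\D])}$, i.e.\ the logarithmic bound. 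The two expressions inside the minimum are simply the outputs of two ways of solving the resulting recurrence: a crude termwise geometric estimate, and a sharper one in which the discrete sum is compared to $\int \mathrm{d}x/x$, which is where the factor $\ln\!\bigl(\tfrac{k+4}{2}\bigr)$ enters.

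The main obstacle is exactly the structural lemma that converts ``no short induced cycles'' into the branching factor $\tfrac{k+4}{2}$: one must guarantee that the recursion can always be driven by a vertex whose link shrinks the supporting vertex set by at least this ratio while the surviving top homological degree still drops by one. Pinning down the optimal constant, rather than some weaker base, is the delicate combinatorial step, and it is also what forces the careful bookkeeping behind both forms of the bound. Once this lemma is in place, the reduction to top homology, the recursion, and the solution of the recurrence are all routine.
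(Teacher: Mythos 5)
The paper itself gives no proof of this statement: it is quoted verbatim (modulo the \cite{EGHP05} rephrasing of $k$-step linearity) from \cite[Theorem~4.9]{DHS13}, so your reconstruction has to be measured against the Dao--Huneke--Schweig argument. Much of your scaffolding matches it: the reduction of $\reg(\kk[\D])$ to the top nonvanishing homological degree of induced subcomplexes, the fact that the no-induced-cycle hypothesis passes to restrictions and (for flag $\D$) to links $\link_\D x = \D|_{N(x)}$, and the deletion--link recursion $\reg(\kk[\D]) \le \max\{\reg(\kk[\D|_{[n]\setminus \{x\}}]),\ \reg(\kk[\link_\D x])+1\}$ are all correct and are all used there. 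But the lemma you hang the whole induction on --- that one can always \emph{choose} a vertex $x$ whose link is supported on at most $\frac{2}{k+4}n$ vertices --- is false. Take $\D$ to be the clique complex of the join $C_m * K_s$ with $m \ge k+4$ and $s \gg 0$: it is flag; it has no induced $m'$-cycles for $4 \le m' \le k+3$ (a $K_s$-vertex is adjacent to every other vertex, so it cannot lie on an induced cycle of length $\ge 4$, and the only long induced cycle is $C_m$ itself); and $\reg(\kk[\D])=2$, so the bound is not vacuous. Yet every vertex has degree at least $s+2$, so for large $s$ no vertex has a link on fewer than $\frac{2}{k+4}n$ vertices. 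Hence the vertex selection you need does not exist, and the proposed induction cannot even start on such complexes.

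The actual argument runs in the opposite direction: one proves by induction on $r$ that $\reg(\kk[\D]) \ge r$ forces $n \ge (k+1)\left(\frac{k+4}{2}\right)^{r-2}+1$. Pass to a \emph{vertex-minimal} induced subcomplex $\D'$ with $\reg(\kk[\D']) \ge r$. Minimality kills the deletion term in the recursion, so \emph{every} link of $\D'$ has regularity at least $r-1$, and therefore, by the inductive hypothesis, every vertex of $\D'$ has \emph{large} degree: large links are forced, not small ones. The constant $k+4$ then enters through an ingredient your sketch never mentions, namely Fr\"oberg's theorem (the linear-resolution case of the \cite{EGHP05} characterization): since $\reg(\kk[\D']) \ge 2$, the $1$-skeleton of $\D'$ is not chordal, so it contains an induced cycle of length $\ge 4$, hence of length $\ge k+4$ by hypothesis. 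The multiplicative factor $\frac{k+4}{2}$ comes from a counting argument around this long induced cycle combined with the high minimum degree --- using, for instance, that the neighbors on an induced cycle of any fixed outside vertex form consecutive runs separated by gaps of length $\ge k+2$, since otherwise a forbidden short induced cycle would appear. That counting, and not a ``small link'' selection, is the combinatorial core of \cite{DHS13}; in the example above it is exactly the passage to the minimal subcomplex (the bare $m$-cycle, discarding $K_s$) that your version has no way of performing.
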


The first term of the right hand side is tighter than the second one
whenever $q\geq 2$.

Since no new induced subcycles are formed through restriction, also this
bound is suitable.

\section{Generalization of results on connectivity}\label{section:ultima}
Recall that a graph $G$ is said to be \emph{$m$-connected}, if it has
more than $m$ vertices and any subgraph obtained from $G$ by deleting
fewer than $m$ vertices and their incident edges is connected
(necessarily with at least one edge). In the language of simplicial
complexes, the previous definition can be restated as follows.

\begin{defn}
Let $G$ be 1-dimensional simplicial complex on a vertex set $V$. Then
$G$ is \emph{$m$-connected} if $|V|>m$ and $G|_{V\setminus T}$ is
connected for any subset $T \subseteq V$ with $|T| < m$.
\end{defn}

Note that by setting $|V|>m$, we exclude the trivial case of the
complete graph $K_{m}$ on $m$-vertices.

The following corollary generalizes and interpolates {Theorems~\ref{thm:BB} and \ref{thm:Ath}}.

\begin{cor}
\label{cor:BalBarAth}
Let $G$ be the 1-skeleton of a vertex minimal $k$-cycle $\Delta $, and
let $s$ be the largest integer such that $\Delta $ has a missing face
of size $s$. Then $G$ is $\left\lceil  \frac{sk}{s-1}
\right\rceil $-connected.
\end{cor}

\begin{proof}
Note that $s$ is the maximum degree of the minimal generators of the
Stanley--Reisner ideal $I_{\Delta }$. Then apply the suitable bound 
{\eqref{eq:bound}} to {Theorem~\ref{thm:main}}.
\end{proof}

Note that Balinsky--Barnette's result is obtained by looking at
$\left \lceil  \frac{sk}{s-1} \right \rceil $ for $s \gg 0$, while
Athanasiadis' one is obtained by setting $s=2$. Furthermore the class
of vertex minimal cycles is ampler than the one of minimal cycles.

We now present a family of simplicial complexes for which the previous
bound on the connectivity is tight. We thank Eran Nevo for suggesting
to build the following example in the same manner as the family of
homology spheres he introduced in \cite{Nev09}, for different
purposes. In our case, for each $s \geq 2$ and each $k \geq s-1$ we
build a vertex minimal $k$-cycle $\Delta $ whose Stanley--Reisner ideal
$I_{\Delta }$ is generated by monomials of degree not exceeding $s$ such
that it is possible to disconnect the 1-skeleton of $\Delta $ by
removing exactly $\left\lceil  \frac{sk}{s-1} \right\rceil $ vertices.
Note that the condition $k \geq s-1$ is not restrictive as $k=s-2$ holds
true only if the vertex minimal $k$-cycle $\Delta $ is the boundary of
an $(s-1)$-simplex.

\begin{ex}
\label{ex:nevo}
Let $s \geq 2$ and $k \geq s-1$ be two integers. Let $sk=(s-1)q'+r'$ the
Euclidean division of $sk$ by $s-1$, for a proper $q' \geq 0$ and a
remainder $0 \leq r' \leq s-2$. Let moreover
$\left \lceil  \frac{sk}{s-1} \right \rceil =sq+r$ be the Euclidean
division of $\left \lceil  \frac{sk}{s-1} \right \rceil $ by $s$, for a
proper $q \geq 0$ and a remainder $0 \leq r \leq s-1$. Note that
$r'=0$ if and only if $r=0$, as the second Euclidean division has no
reminder if and only if $\left \lceil  \frac{sk}{s-1} \right \rceil =
\frac{sk}{s-1}$.

We first note that $r \neq 1$. Suppose otherwise, then $r' \neq 0$ and
$\left \lceil  \frac{sk}{s-1} \right \rceil =q'+1$. The second Euclidean
division can be rewritten as
\[
q'+1=sq+1
\]
and therefore, rewriting $q'$ as $\frac{sk-r'}{s-1}$, we get
\[
sk= (s-1)sq+r'.
\]
So
\[
s(k-sq+q)=r',
\]
which is impossible, as $s\geq 2$ and $ 0 \leq r' \leq s-2$.

So the remainder $r$ must either equal $0$ or satisfy $2 \leq r
\leq s-1$. In both cases we build $\Delta $ explicitly.

Recall that the \emph{simplicial join} $\Delta_{1} * \Delta_{2}$ of two
simplicial complexes $\Delta_{1}$ and $\Delta_{2}$ on two disjoint
vertex sets is the simplicial complex whose faces can be written as the
union of a face of $\Delta_{1}$ with a face of $\Delta_{2}$. Moreover,
recall that the simplicial join of two spheres of dimension
$d_{1}$ and $d_{2}$ is a $(d_{1} + d_{2} +1)$-sphere.

If $r=r'=0$, we define $\Delta =\partial \sigma^{1} * \partial
\sigma^{s-1} * \cdots *\partial \sigma^{s-1}$, where $\partial \sigma
^{i}$ denotes the boundary of an $i$-dimensional simplex, and
$\partial \sigma^{s-1}$ appears $q$ times in the join. In this way,
$\Delta $~is a sphere of dimension $q(s-1)$. We now prove that
$q(s-1)=k$; by definition of $q$ we get
\[
q(s-1)=\frac{sk}{s(s-1)} (s-1)= k.
\]

Conversely, let $2 \leq r \leq s-1$. As observed before, $r'\neq 0$. In
this case we define $\Delta $ as the join $\partial \sigma^{1} *
\partial \sigma^{s-1} * \cdots *\partial \sigma^{s-1} * \partial
\sigma^{r-1}$, where $\partial \sigma^{s-1}$ appears $q$ times. In this
way, $\Delta $~is a $(q(s-1)+r-1)$-sphere. We now prove that
$q(s-1)+r-1=k$. Indeed,
\[
q(s-1)+r-1 = \frac{q'+1-r}{s}(s-1)+r -1=k -\frac{r-r'-1}{s}.
\]
The quantity $\frac{r-r'-1}{s}$ has to be an integer, and since
$0 \leq r\leq s-1$ and $0 \leq r' \leq s-2$, the only integer value it
can equal is 0.

In both the cases $\Delta $ is an $k$-sphere on
$\left\lceil  \frac{sk}{s-1} \right\rceil +2$ vertices, indeed the second
Euclidean division counts exactly the number of vertices of the join
except for the two vertices of $\partial \sigma^{1}$. Moreover, in both
the cases, the largest $i$ such that $\partial \sigma^{i}$ is in
$\Delta $ is $s-1$, and therefore the Stanley--Reisner ideal
$I_{\Delta }$ is generated by monomials of degree at most $s$. So
$\Delta $ satisfies the hypothesis of Corollary~\ref{cor:BalBarAth}.

If we remove from the 1-skeleton of $\Delta $ all the vertices but the
two belonging to $\partial \sigma^{1}$, we disconnect it. Note that we
are removing $n-2=\left\lceil  \frac{sk}{s-1} \right\rceil $ vertices,
therefore the 1-skeleton of $\Delta $ can not be more than
$\left\lceil  \frac{sk}{s-1} \right\rceil $-connected.
\end{ex}

If we have sufficient hypotheses to apply the bound for the regularity
given by Dao, Huneke, Schweig (see Theorem~\ref{thm:DHS}) we can obtain
the following result in which the connectivity of the simplicial complex
grows exponentially on $k$.

\begin{cor}
Let $G$ be the 1-skeleton of a vertex minimal $k$-cycle $\Delta $ which
is flag and without induced $q$-cycles for $q \geq 4$. Then $G$ is
$M$-connected, where
\[
M=\max \left \{  \left\lceil  (q+1)\left( \frac{q+4}{2} \right) ^{k-2} +1
\right\rceil ,
\left\lceil  \frac{q+1}{\ln (\frac{q+4}{2})}\left( \left( \frac{q+4}{2} \right)
^{k-2}-\frac{2}{q+4}\right) +1 \right \rceil  \right\} .
\]
\end{cor}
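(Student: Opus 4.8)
The plan is to run the same argument as in Corollary~\ref{cor:BalBarAth}, but feeding Theorem~\ref{thm:main} the suitable bound of Theorem~\ref{thm:DHS} in place of the elementary bound (\ref{eq:bound}). First I would recast $M$-connectivity in the form handled by Theorem~\ref{thm:main}. Let $W \subseteq [n]$ be any set of vertices whose deletion disconnects $G$; equivalently, setting $T=[n]\setminus W$, the restriction $\D|_T$ is disconnected. Theorem~\ref{thm:main} then gives $\reg(\kk[\D|_{[n]\setminus T}])=\reg(\kk[\D|_W]) \geq h$. Hence it suffices to show that every such $W$ satisfies $|W| \geq M$: this is exactly the assertion that no deletion of fewer than $M$ vertices disconnects $G$.

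Next I would verify that Theorem~\ref{thm:DHS} can be applied to the restriction $\D|_W$. Since $\D$ is flag and free of the relevant induced cycles, and since passing to an induced subcomplex neither destroys flagness nor creates new induced cycles, the hypotheses of Theorem~\ref{thm:DHS} persist for $\D|_W$ with $|W|$ in the role of $n$; this is precisely the suitability recorded after that theorem. Writing $A(|W|)$ and $B(|W|)$ for the two logarithmic expressions in its statement, the two steps combine to
\[
h \leq \reg(\kk[\D|_W]) \leq \min\{A(|W|),\, B(|W|)\}.
\]

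It remains to invert this inequality. Because $h$ lies below the minimum of the two terms, it lies below each of them, so I would solve $h \leq A(|W|)$ and $h \leq B(|W|)$ separately by exponentiating with base $\tfrac{k+4}{2}$ and isolating $|W|$. Each yields a real lower bound for $|W|$, and rounding up (as $|W|\in\N$) reproduces, respectively, the two arguments of the maximum defining $M$. Since both inequalities must hold simultaneously, $|W|$ is at least the larger of the two, i.e. $|W| \geq M$, which is the desired conclusion.

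There is no substantial obstacle here; the one point to handle with care is the bookkeeping of the inversion. The expressions $A$ and $B$ are strictly increasing in $|W|$ (the base $\tfrac{k+4}{2}$ exceeds $1$), so the upper bound $h \leq \min\{A,B\}$ transforms into the lower bound $|W| \geq \max\{A^{-1}(h),\,B^{-1}(h)\}$ --- the minimum of Theorem~\ref{thm:DHS} thereby becoming the maximum in the statement of $M$. The only other thing to confirm, namely that $\D|_W$ inherits the cycle and flag conditions, is already guaranteed by suitability.
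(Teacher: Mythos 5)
Your proposal is correct and follows exactly the paper's own route: the paper's proof is the single line ``Apply Theorem~\ref{thm:DHS} to Theorem~\ref{thm:main}'', and your write-up just makes explicit what that one-liner leaves implicit, namely identifying the cut set $W$ with $[n]\setminus T$, invoking suitability of the Dao--Huneke--Schweig bound on $\D|_W$, and inverting the two increasing logarithmic expressions so that the minimum in Theorem~\ref{thm:DHS} becomes the maximum defining $M$.
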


\begin{proof}
Apply Theorem~\ref{thm:DHS} to Theorem~\ref{thm:main}.
\end{proof}

For the sake of readability, we emphasize that from the corollary it
follows that $G$ results at least $\left\lceil  \left( \frac{q}{2}\right)
^{k-1}\right\rceil $-connected.

A family of simplicial pseudomanifolds of arbitrary dimension which
satisfy the hypotheses of the previous corollary has been built by
Januszkiewicz and \'{S}wi{\c{a}}tkowski in \cite{JS03}.

\subsection*{Acknowledgements}
The author is extremely grateful to Matteo Varbaro for his valuable
support and advice. He also thanks Afshin Goodarzi and J\"{o}rgen
Backelin for the careful proofreading, Benjamin Nill and Alessio
D'Al\`{i} for the useful suggestions and corrections, Eran Nevo for
leading the author to {Example~\ref{ex:nevo}}, and the anonymous referees
for the insightful and patient comments.

\bibliographystyle{plain}

\end{document}